\newtheorem{theorem}{Theorem}[section]
\newtheorem{lemma}[theorem]{Lemma}
\newtheorem{corollary}[theorem]{Corollary}
\theoremstyle{definition}
\numberwithin{equation}{section}
\def\be{\begin{equation}}
\def\ee{\end{equation}}
\newcounter{alphabet}
\begin{document}
\bibliographystyle{amsplain}
\title[Second Hankel determinant of logarithmic inverse coefficients]{Second Hankel Determinant for Logarithmic Inverse Coefficients of Strongly Convex and Strongly Starlike Functions}
\author[Vasudevarao Allu]{Vasudevarao Allu}
\address{Vasudevarao Allu, School of Basic Sciences, Indian Institute of Technology Bhubaneswar,
Bhubaneswar-752050, Odisha, India.}
\email{avrao@iitbbs.ac.in}
\author[Amal Shaji]{Amal Shaji}
\address{Amal Shaji, School of Basic Sciences, Indian Institute of Technology Bhubaneswar,
Bhubaneswar-752050, Odisha, India.}
\email{amalmulloor@gmail.com}
\subjclass[2010]{30C45, 30C50, 30C55.}
\keywords{Univalent functions, Logarithmic coefficients, Hankel determinant, Stronly convex, Strongly Starlike}

\begin{abstract}
In this paper, we obtain the sharp bounds of the second Hankel determinant of logarithmic  inverse coefficients  for the strongly starlike and strongly convex functions of order alpha.
\end{abstract}

\maketitle

\section{Introduction}\label{Introduction}
Let $\mathcal{H}$ denote the class of analytic functions in the unit disk $\mathbb{D}:=\{z\in\mathbb{C}:\, |z|<1\}$. Here $\mathcal{H}$ is 
a locally convex topological vector space endowed with the topology of uniform convergence over compact subsets of $\mathbb{D}$. Let $\mathcal{A}$ denote the class of functions $f\in \mathcal{H}$ such that $f(0)=0$ and $f'(0)=1$.  Let $\mathcal{S}$ 
denote the subclass of  $\mathcal{A}$ consisting of functions which are univalent ({\em i.e., one-to-one}) in $\mathbb{D}$. 
If $f\in\mathcal{S}$ then it has the following series representation
\begin{equation}\label{f}
	f(z)= z+\sum_{n=2}^{\infty}a_n z^n, \quad z\in \mathbb{D}.
\end{equation}

The {\it Logarithmic coefficients} $\gamma_{n}$ of $f\in \mathcal{S}$ are defined by,
\begin{equation}\label{amal-1}
	F_{f}(z):= \log\frac{f(z)}{z}=2\sum\limits_{n=1}^{\infty}\gamma_{n}z^{n}, \quad z \in \mathbb{D}.
\end{equation}
The logarithmic coefficients $\gamma_{n}$ play a central role in the theory of univalent functions. A very few exact upper bounds for $\gamma_{n}$ seem to have been established. The significance of this problem in the context of Bieberbach conjecture was pointed by Milin\cite{milin} in his conjecture. Milin \cite{milin} has conjectured that for $f\in \mathcal{S}$ and $n\ge 2$, 
$$\sum\limits_{m=1}^{n}\sum\limits_{k=1}^{m}\left(k|\gamma_{k}|^{2}-\frac{1}{k}\right)\le 0,$$
which led De Branges, by proving this conjecture, to the proof of Bieberbach conjecture  \cite{De Branges-1985}. For the Koebe function $k(z)=z/(1-z)^{2}$, the logarithmic coefficients are $\gamma_{n}=1/n$. Since the Koebe function $k$ plays the role of extremal function for most of the extremal problems in the class $\mathcal{S}$, it is expected that $|\gamma_{n}|\le1/n$ holds for functions in $\mathcal{S}$. But this is not true in general, even in order of magnitude. By differentiating \eqref{amal-1} and  the equating coefficients we obtain
\begin{equation}\label{gamma}
\begin{aligned}
& \gamma_{1}=\frac{1}{2}a_{2}, \\[2mm]
& \gamma_{2}=\frac{1}{2}(a_{3}-\frac{1}{2}a_{2}^{2}),\\[2mm]
& \gamma_{3}=\frac{1}{2}(a_{4}-a_{2}a_{3}+\frac{1}{3}a_{2}^{3}).
\end{aligned}
\end{equation}
If $f\in \mathcal{S}$, it is easy to see that $|\gamma_{1}|\le 1$, because $|a_2| \leq 2$. Using the Fekete-Szeg$\ddot{o}$ inequality \cite[Theorem 3.8]{Duren-book-1983} for functions in $\mathcal{S}$ in (1.4), we obtain the sharp estimate 
$$|\gamma_{2}|\le\frac{1}{2}\left(1+2e^{-2}\right)=0.635\ldots.$$
For $n\ge 3$, the problem seems much harder, and no significant bound for $|\gamma_{n}|$ when $f\in \mathcal{S}$ appear to be known. In 2017, Ali and Allu\cite{vasu2017} obtained the initial logarithmic coefficients bounds for close-to-convex functions. The problem of computing the bound of the logarithmic coefficients is also considered in \cite{cho,PSW20,vasu-2018,Thomas-2016} for several subclasses of close-to-convex functions.
\\

For $q,n \in \mathbb{N}$, the Hankel determinant $H_{q,n}(f)$ of Taylor’s coefficients of function $f \in \mathcal{A}$ of the form \eqref{f} is defined by
$$
H_{q,n}(f) =
\begin{vmatrix}
	a_n & a_{n+1}  & \cdots & a_{n+q-1} \\ 
	a_{n+1} & a_{n+2}  & \cdots & a_{n+q} \\
	\vdots & \vdots &  \ddots &\vdots \\
	a_{n+q-1} & a_{n+q}  & \cdots & a_{n+2(q-1)}
\end{vmatrix}.
$$
The Hankel determinant for various order is also studied recently by several authors in different contexts;
for instance see \cite{Pom66,Pom67,ALT}.
One can easily observe that the Fekete-Szeg\"{o} functional is the second Hankel determinant $H_{2,1}(f)$. Fekete-Szeg$\ddot{o}$  then
further generalized the estimate $|a_3 - \mu a_2
^2|$ with $\mu$ real for $f$ given by \eqref{f} (see \cite[Theorem 3.8]{Duren-book-1983}). \\[2mm]
Let $g$ be the inverse function of $f\in \mathcal{S}$  defined in a neighborhood of the
origin with the Taylor series expansion
\begin{equation}\label{inverse}
g(w)=f^{-1}(w)=w+\sum_{n=2}^{\infty}A_n w^n,
\end{equation}
where we may choose $|w| < 1/4$, as we know from Koebe’s $1/4$-theorem. Using
 variational method, L\"{o}wner \cite{Lowner} obtained the sharp estimate:
$$
|A_n| \leq K_n  \quad \text{for each}\,\,\, n \in \mathbb{N}$$
where $K_n = (2n)!/(n!(n + 1)!)$ and $K(w) = w + K_2w_2 + K_3w_3 + \cdots $ is the
inverse of the Koebe function. There has been a good deal of interest in determining the behaviour of the inverse coefficients of $f$ given in \eqref{f} when the
corresponding function $f$ is restricted to some proper geometric subclasses
of $\mathcal{S}$.\\[2mm]
Let $f(z)= z+\sum_{n=2}^{\infty}a_n z^n $ be a function in  class $\mathcal{S}$. Since $f(f^{-1})(w)=w$ and using \eqref{inverse}, it follows that
\begin{equation}\label{inversetaylor}
\begin{aligned}
& A_{2}=-a_2,\\[2mm]
& A_{3}=-a_3+2a_2^2,\\[2mm]
& A_{4}=-a_4+5a_2a_3-5a_2^3.
\end{aligned}
\end{equation}
\\

The notion of logarithmic inverse coefficients, {\em i.e.}, logartithmic coefficients of inverse of $f$, was proposed by ponnusamy {\it et al.} \cite{samyinverselog}. The {\it logarithmic
inverse coefficients} $\Gamma_n$, $n \in \mathbb{N}$, of $f$ are defined by the equation
\begin{equation}\label{Gamma}
	F_{f^{-1}}(w):= \log\frac{f^{-1}(w)}{w}=2\sum\limits_{n=1}^{\infty}\Gamma_{n}w^{n}, \quad |w|<1/4.
\end{equation}
By differentiating \eqref{Gamma} together with \eqref{inversetaylor}, we get 
\begin{equation}\label{Gammaaa}
\begin{aligned}
& \Gamma_{1}=-\cfrac{1}{2}a_2,\\[2mm]
& \Gamma_2=-\cfrac{1}{2}a_3+\cfrac{3}{4}a_2^2,\\[2mm]
& \Gamma_3=-\cfrac{1}{2}a_4+2a_2a_3-\cfrac{5}{3}a_2^3.
\end{aligned}
\end{equation}
In \cite{samyinverselog} Ponnusamy {\it et al.} found the sharp upper bound  for the logarithmic inverse coefficients for the class $\mathcal{S}$. In fact ponnusamy {\it et al.} \cite{samyinverselog} proved that when $f\in \mathcal{S}$, $$|\Gamma_n| \leq \frac{1}{2n}
  \left(
  \begin{matrix}
    2n \\
    n
  \end{matrix}
  \right),
  \quad n \in \mathbb{N}
$$
and equality holds only for the Koebe function or one of its rotations. Further, ponnusamy {\it et al.} \cite{samyinverselog} obtained sharp bound for the initial logarithmic inverse coefficients for some of the important geometric subclasses of $
\mathcal{S}$.\\

Recently, Kowalczyk and Lecko \cite{adam} together have proposed the study of the Hankel determinant whose entries are logarithmic coefficients of $ f \in \mathcal{S}$, which is given by

$$
H_{q,n}(F_f/2) =
\begin{vmatrix}
	\gamma_n & \gamma_{n+1}  & \cdots & \gamma_{n+q-1} \\ 
	\gamma_{n+1} & \gamma_{n+2}  & \cdots & \gamma_{n+q} \\
	\vdots & \vdots &  \ddots &\vdots \\
	\gamma_{n+q-1} & \gamma_{n+q}  & \cdots & \gamma_{n+2(q-1)}
\end{vmatrix}.
$$
Kowalczyk and Lecko \cite{adam} have obtained the sharp bound of the second Hankel determinant of $F_f/2$, {\em i.e.,} $H_{2,1}(F_f/2)$ for starlike and convex functions. The problem of computing the sharp bounds of $H_{2,1}(F_f/2)$ has been considered by many authors for various subclasses of $\mathcal{S}$ (See \cite{vibhuthi,vibhuthi2,adam2,Mundalia}). 
\\

In this paper, we consider the notion of the second Hankel determinant for logarithmic inverse coefficients.
Let $ f \in \mathcal{S}$ given by \eqref{f}, then the second Hankel determinant of $F_{f^{-1}}/2$ by using \eqref{Gammaaa}, is given by

\begin{equation}\label{hankel}
\begin{aligned}
	H_{2,1}(F_{f^{-1}}/2)
&=\Gamma_1\Gamma_3-\Gamma_{2}^2 \\[2mm]
&=\frac{1}{4}\left(A_2A_4-A_3^2+\frac{1}{4}A_2^4\right)\\[2mm]
&=\frac{1}{48}\left(13a_2^4-12a_2^2a3-12a_3^2+12a_2a_4\right).
\end{aligned}
\end{equation}
\\[1mm]
It is now appropriate to remark that $H_{2,1}(F_{f^{-1}}/2)$ is invariant under rotation, since for $f_{\theta}(z):=e^{-i \theta} f\left(e^{i \theta} z\right), \theta \in \mathbb{R}$ when $f \in \mathcal{S}$ we have
\begin{equation}\label{invariance}
H_{2,1}(F_{f_{\theta}^{-1}}/2)=\frac{e^{4 i \theta}}{48}\left(13a_2^4-12a_2^2a3-12a_3^2+12a_2a_4\right)=e^{4 i \theta} H_{2,1}(F_{f^{-1}}/2) .
\end{equation}
In this paper we find sharp upper bounds for $|H_{2,1}(F_{f^{-1}}/2)|
$ when $f$ belongs to the class of strongly convex or strongly starlike function of order alpha.
Given $\alpha \in (0,1]$, a function $f \in \mathcal{A}$ is called strongly convex of order $\alpha$ if
\begin{equation}\label{convexdef}
\left|\text{arg} \left(1+\cfrac{zf''(z)}{f'(z)}\right)\right|< \cfrac{\pi \alpha}{2}
\end{equation}
The set of all such functions denoted  by $\mathcal{K}_{\alpha}$. Also, a function $f \in \mathcal{A}$ is called strongly starlike of order $\alpha$ if
\begin{equation}\label{stardef}
\left|\text{arg} \left(\cfrac{zf'(z)}{f(z)}\right)\right|< \cfrac{\pi \alpha}{2}
\end{equation}

\noindent The set of all such functions denoted by $\mathcal{S}^*_{\alpha}$. The notion of strongly starlike functions was introduced by Stankiewicz \cite{Stankiewicz1} and s independently by Brannan and Kirwan\cite{Brannan}. An external geometric characterisation of strongly starlike functions was proposed by Stankiewicz \cite{Stankiewicz2}. Brannan and Kirwan\cite{Brannan} found  a geometrical condition called $\delta-$ visibility which is sufficient for functions to be starlike.

\section{Preliminary Results}
In this section, we present the key lemmas which will be used to prove the main results of this paper. Let $\mathcal{P}$ denote the class of all analytic functions $p$ having positive real part in $\mathbb{D}$, with the form
\begin{equation}\label{p}
p(z)=1+c_{1} z+c_{2} z^{2}+c_{3} z^{3}+ \cdots .
\end{equation}
A member of $\mathcal{P}$ is called a Carathéodory function. It is known that $\left|c_{n}\right| \leq 2, n \geq 1$ for a function $p \in \mathcal{P}$. By using \eqref{convexdef} and \eqref{stardef}, functions in the classes $\mathcal{S}^*_{\alpha}$ and $\mathcal{K}_{\alpha}$ can be represented interms of functions in Carathéodory class $\mathcal{P}$. To prove our main result, we need some preliminary lemmas. The first one is known as Caratheodory's lemma and the second one is due to Libera and Zlotkiewicz.
\begin{lemma}\cite{Duren-book-1983}\label{pp}
For a function $p \in \mathcal{P}$ of the form \eqref{p}, the sharp inequality holds for each $n\geq 1$. Equality holds for the function $p(z)=1+z/1-z$.
\end{lemma}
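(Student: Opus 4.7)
The statement is the classical Carath\'{e}odory coefficient bound: for $p\in\mathcal{P}$ with expansion \eqref{p}, one has $|c_n|\le 2$ for all $n\ge 1$, with equality achieved by $p(z)=(1+z)/(1-z)$ (and its rotations). My plan is to derive it from the Herglotz integral representation of functions with positive real part.

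\textbf{Step 1: Herglotz representation.} I would invoke the standard fact that every $p\in\mathcal{P}$ with $p(0)=1$ admits the integral representation
\begin{equation*}
p(z)=\int_{\partial\mathbb{D}}\frac{\xi+z}{\xi-z}\,d\mu(\xi),
\end{equation*}
where $\mu$ is a Borel probability measure on the unit circle. This follows from the fact that $\operatorname{Re} p$ is a positive harmonic function on $\mathbb{D}$ with value $1$ at the origin, so it is the Poisson integral of a probability measure, and $p$ is recovered (up to the imaginary constant, which vanishes because $p(0)=1$ is real) by replacing the Poisson kernel with its analytic completion.

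\textbf{Step 2: Coefficient formula and the bound.} Expanding the kernel as a geometric series for $|z|<|\xi|=1$,
\begin{equation*}
\frac{\xi+z}{\xi-z}=1+2\sum_{n=1}^{\infty}\bar{\xi}^{\,n}z^n,
\end{equation*}
and integrating termwise against $d\mu$ gives
\begin{equation*}
c_n=2\int_{\partial\mathbb{D}}\bar{\xi}^{\,n}\,d\mu(\xi),\qquad n\ge 1.
\end{equation*}
Taking absolute values and using $|\bar\xi^{\,n}|=1$ together with $\mu(\partial\mathbb{D})=1$ yields $|c_n|\le 2$.

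\textbf{Step 3: Equality case.} If $|c_n|=2$, then the triangle inequality for the integral of $\bar\xi^{\,n}$ against the probability measure $\mu$ must be tight, which forces the argument of $\bar\xi^{\,n}$ to be constant $\mu$-a.e.; this is only possible if $\mu$ is supported on the set of $n$-th roots of a fixed unimodular number. Specializing to a Dirac mass at $\xi=1$ recovers $p(z)=(1+z)/(1-z)$, verifying sharpness; other extremals come from rotations.

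\textbf{Main obstacle.} The proof itself is routine once Herglotz is in hand; the only subtlety is establishing the Herglotz representation cleanly. An alternative that avoids measure theory is to apply the Schwarz lemma to $\omega(z):=(p(z)-1)/(p(z)+1)$, which maps $\mathbb{D}$ into itself with $\omega(0)=0$, and then extract the coefficients of $p(z)=(1+\omega(z))/(1-\omega(z))$ using $|\omega(z)|<1$; the price is a slightly less transparent treatment of the equality case. Either route suffices.
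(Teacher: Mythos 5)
Your proof is correct. Note that the paper itself offers no argument here: the lemma (Carath\'eodory's bound $|c_n|\le 2$, which you correctly read into the garbled statement) is simply quoted from Duren's book, and the standard proof there is essentially the one you give --- Duren phrases it through the identity $c_n r^n=\frac{1}{\pi}\int_0^{2\pi}\operatorname{Re}\,p(re^{i\theta})\,e^{-in\theta}\,d\theta$ for positive harmonic $\operatorname{Re}\,p$, which is just the Herglotz-measure computation in Fourier language, so your route and the cited one coincide in substance; the Dirac mass at $\xi=1$ correctly exhibits sharpness via $p(z)=(1+z)/(1-z)$, with $c_n=2$ for all $n$.
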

\begin{lemma}\cite{LZ1,LZ2}\label{caratheodary}
If $p \in \mathcal{P}$ is of the form \eqref{p} with $c_1 \geq 0$. Then there exits $z,w \in \mathbb{D}$ such that
\begin{equation*}
 2c_{2}=c_1^2+(4-c_1^2)z
 \end{equation*}
and 
 
 \begin{equation*}
 4c_3=c_1^3+2(4-c_1^2)c_1z-c_1(4-c_1^2)z+2(4-c_1^2)(1-|x|^2)w.
 \end{equation*}

\end{lemma}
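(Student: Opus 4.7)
The plan is to derive both parameterizations by the standard Schur-algorithm reduction that underlies such formulas. I would begin by passing from $p \in \mathcal{P}$ to its associated Schur function $\omega(z) = (p(z)-1)/(p(z)+1)$, an analytic self-map of $\mathbb{D}$ with $\omega(0)=0$. Inverting this relation gives $p(z) = 1 + 2\omega(z) + 2\omega(z)^2 + 2\omega(z)^3 + \cdots$, and matching coefficients with $\omega(z) = \alpha_1 z + \alpha_2 z^2 + \alpha_3 z^3 + \cdots$ yields the elementary relations $c_1 = 2\alpha_1$, $c_2 = 2\alpha_2 + 2\alpha_1^2$, and $c_3 = 2\alpha_3 + 4\alpha_1\alpha_2 + 2\alpha_1^3$.

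Next, since $\omega(0)=0$, by Schwarz's lemma $\psi(z) := \omega(z)/z$ is again in the Schur class with $\psi(0) = \alpha_1$, and the hypothesis $c_1 \geq 0$ forces $\alpha_1 = c_1/2 \in [0,1]$. One iteration of the Schur algorithm represents
\[
\psi(z) = \frac{\alpha_1 + z\,\psi_1(z)}{1 + \alpha_1\, z\,\psi_1(z)}
\]
for some Schur function $\psi_1$. Writing $\psi_1(z) = x + \beta\, z + \cdots$, the classical Schur--Pick estimate $|\beta| \le 1 - |x|^2$ allows the substitution $\beta = (1 - |x|^2)\,w$ with $x, w \in \overline{\mathbb{D}}$; these are precisely the two parameters appearing in the statement (with $x$ playing the role of $z$ there).

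Expanding the quotient above to order $z^2$, I would obtain
\[
\alpha_2 = (1 - \alpha_1^2)\,x, \qquad \alpha_3 = (1 - \alpha_1^2)\bigl((1 - |x|^2)\,w - \alpha_1 x^2\bigr),
\]
and then substitute back into the formulas for $c_2$, $c_3$ from the first step, using $\alpha_1 = c_1/2$ together with the key identity $4(1 - \alpha_1^2) = 4 - c_1^2$. Collecting terms yields $2c_2 = c_1^2 + (4-c_1^2)\,x$ and $4c_3 = c_1^3 + 2(4-c_1^2)\,c_1\,x - c_1(4-c_1^2)\,x^2 + 2(4-c_1^2)(1-|x|^2)\,w$, as desired.

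The argument is almost purely algebraic once the Schur reduction is in place, so there is no substantive analytic difficulty; the main obstacle is bookkeeping, namely tracking the factor $(4-c_1^2)$ through the Schur step and correctly producing the $(1-|x|^2)$ from the second-order Schur--Pick inequality. One subtlety requiring a brief comment is the boundary case $c_1 = 2$: here $\omega(z) = z$ and the auxiliary Schur function becomes undetermined, but the prefactor $(4 - c_1^2)$ vanishes, so the formulas are automatically valid for any choice of $x, w$.
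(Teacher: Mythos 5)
Your proof is correct: the reduction $p\mapsto\omega=(p-1)/(p+1)$, the coefficient relations $c_1=2\alpha_1$, $c_2=2\alpha_2+2\alpha_1^2$, $c_3=2\alpha_3+4\alpha_1\alpha_2+2\alpha_1^3$, the Schur step $\psi(z)=(\alpha_1+z\psi_1(z))/(1+\alpha_1 z\psi_1(z))$, and the bound $|\beta|\le 1-|x|^2$ all check out, and the expansion does give $\alpha_2=(1-\alpha_1^2)x$, $\alpha_3=(1-\alpha_1^2)\bigl((1-|x|^2)w-\alpha_1x^2\bigr)$, hence the two stated identities; your remark on the degenerate case $c_1=2$ (where $\psi$ is a unimodular constant and the Schur step is vacuous, but the factors $4-c_1^2$ vanish) closes the only gap such an argument could have. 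Note, however, that the paper itself offers no proof of this lemma: it is quoted from Libera and Zlotkiewicz, whose original derivation rests on the Carath\'eodory--Toeplitz positivity conditions (equivalently, the parametric representation of Carath\'eodory functions), rather than on an explicit Schur-algorithm iteration. The two routes are essentially equivalent in content, but yours has the advantage of being short, self-contained and of exhibiting the parameters $x,w$ constructively. Two small points of reconciliation with the statement as printed: your derivation yields parameters in the closed disk $\overline{\mathbb{D}}$, which is the correct formulation (the open-disk claim in the lemma fails, e.g., for $p(z)=(1+z^2)/(1-z^2)$, where $x=1$ is needed), and the displayed formula for $4c_3$ in the paper contains typographical slips ($z$ in place of $z^2$ in the third term, and $|x|$ versus $z$); the identities you prove are the intended, standard ones, which are also the ones actually used later in the paper.
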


\noindent Next we recall the following well-known result due to Choi {\it et al.} \cite{choi}. Lemma \ref{y(a,b,c)} plays an important role in the proof of our main results.

\begin{lemma}\label{y(a,b,c)}
Let $A, B, C$ be real numbers and

$$
Y(A, B, C):=\max _{z \in \overline{\mathbb{D}}}\left(\left|A+B z+C z^{2}\right|+1-|z|^{2}\right) .
$$

(i) If $A C \geq 0$, then

$$
Y(A, B, C)= \begin{cases}|A|+|B|+|C|, & |B| \geq 2(1-|C|), \\[2mm] 1+|A|+\cfrac{B^{2}}{4(1-|C|)}, & |B|<2(1-|C|) .\end{cases}
$$

(ii) If $A C<0$, then

$$
Y(A, B, C)= \begin{cases}1-|A|+\cfrac{B^{2}}{4(1-|C|)}, & -4 A C\left(C^{-2}-1\right) \leq B^{2} \wedge|B|<2(1-|C|), \\[2mm] 1+|A|+\cfrac{B^{2}}{4(1+|C|)}, & B^{2}<\min \left\{4(1+|C|)^{2},-4 A C\left(C^{-2}-1\right)\right\}, \\[2mm] R(A, B, C), & \text { otherwise, }\end{cases}
$$

where

$$
R(A, B, C)= \begin{cases}|A|+|B|+|C|, & |C|(|B|+4|A|) \leq|A B|, \\[2mm] -|A|+|B|+|C|, & |A B| \leq|C|(|B|-4|A|), \\[2mm] (|A|+|C|) \sqrt{1-\cfrac{B^{2}}{4 A C}}, & \text { otherwise. }\end{cases}
$$
\end{lemma}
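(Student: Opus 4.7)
The plan is to treat $Y(A,B,C)$ as a maximization over the closed unit disk $\overline{\mathbb{D}}$ by parameterizing $z = re^{i\theta}$ with $r \in [0,1]$, $\theta \in [0, 2\pi)$. A direct expansion, using that $A, B, C$ are real, gives
\[
|A + Bz + Cz^2|^2 = A^2 + B^2 r^2 + C^2 r^4 + 2rB(A + Cr^2)\cos\theta + 2ACr^2\cos 2\theta.
\]
Substituting $\cos 2\theta = 2\cos^2\theta - 1$ and writing $u = \cos\theta \in [-1,1]$ turns this into a quadratic in $u$ with leading coefficient $4ACr^2$. The sign of $AC$ therefore controls whether the inner maximum is at an endpoint or at a vertex, which is precisely the dichotomy between parts (i) and (ii) of the lemma.

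For part (i) where $AC \geq 0$, the quadratic in $u$ opens upward, so its maximum on $[-1,1]$ is at an endpoint, and a short computation yields
\[
\max_{|z| = r}|A + Bz + Cz^2| = |A| + |B|r + |C|r^2.
\]
One then maximizes
\[
h(r) = |A| + |B|r + (|C| - 1) r^2 + 1
\]
on $[0,1]$. Assuming $|C| < 1$, the interior critical point is $r^* = |B|/(2(1 - |C|))$, which lies in $[0,1)$ exactly when $|B| < 2(1-|C|)$; comparing the value at $r^*$ with the value at the endpoint $r=1$ produces the two cases of (i).

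For part (ii) where $AC < 0$, the quadratic in $u$ opens downward, so its unconstrained maximum occurs at the vertex $u^* = -B(A + Cr^2)/(4ACr)$. One must split according to whether $|u^*| \leq 1$ (interior maximum in $\theta$, yielding a formula involving $-|A|$) or $|u^*| > 1$ (the endpoints $u = \pm 1$ dominate, yielding formulas of type $|A| + |B|r + |C|r^2$ or $|A| - |B|r + |C|r^2$). Each regime then leaves a function of $r$ alone to optimize on $[0,1]$ after adding $1 - r^2$. The threshold inequality $-4AC(C^{-2} - 1) \leq B^2$ encodes precisely whether the vertex condition $|u^*| \leq 1$ can be satisfied for some $r \in [0,1]$ at the relevant interior critical $r$, while $B^2 < 4(1 + |C|)^2$ controls whether the $u = \pm 1$ branch has its $r$-maximum in the interior of $[0,1]$. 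The residual function $R(A,B,C)$ collects the boundary-in-$u$ regime, whose three subcases are the outcomes of comparing the endpoint $r=1$ with the interior critical $r$ obtained on each branch, leading to the criterion $|C|(|B|+4|A|) \lessgtr |AB|$.

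The main obstacle is purely bookkeeping: verifying that the stated inequalities are exactly the sharp cut-offs between the regimes, that the candidate formulas agree on their common boundaries, and that the maximizing $z$ remains in $\overline{\mathbb{D}}$ after the sign choices for $u$ and the range of $r$ are accounted for. Each individual step is one-variable calculus, but the sheer number of subcases requires a careful and unified treatment to produce the compact statement of the lemma.
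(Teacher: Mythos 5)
The paper itself does not prove this lemma: it is quoted verbatim from Choi, Kim and Sugawa \cite{choi}, so there is no internal proof to compare against. Your outline follows the same basic strategy as that source (write $z=re^{i\theta}$, observe that $|A+Bz+Cz^2|^2$ is a quadratic in $u=\cos\theta$ with leading coefficient $4ACr^2$, so the sign of $AC$ decides concavity, then optimize in $r$). Your treatment of part (i) is essentially complete and correct; the only small points are that when $|C|\ge 1$ the condition $|B|\ge 2(1-|C|)$ holds automatically, and that no comparison of values at $r^*$ and $r=1$ is needed, since the parabola in $r$ is concave so the vertex, when it lies in $[0,1)$, is automatically the maximum.

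Part (ii), however, is where all the content of the lemma sits, and there your argument is a plan rather than a proof: the precise thresholds $-4AC(C^{-2}-1)\le B^2$, $B^2<4(1+|C|)^2$, and the three-way split defining $R(A,B,C)$ are asserted to be "exactly the sharp cut-offs" but never derived, and this is dismissed as bookkeeping. Moreover one structural claim in the sketch is wrong as stated: when $AC<0$ and the vertex $u^*$ falls outside $[-1,1]$, the maximum over $u=\pm1$ is $|A+Cr^2|+|B|r=\bigl||A|-|C|r^2\bigr|+|B|r$, not a formula "of type $|A|+|B|r+|C|r^2$ or $|A|-|B|r+|C|r^2$"; the sign cancellation between $A$ and $Cr^2$ is precisely what generates the bounds $1+|A|+\frac{B^2}{4(1+|C|)}$ and $-|A|+|B|+|C|$, and it forces an additional case split in $r$ according to whether $|A|\gtrless|C|r^2$, which your outline does not mention. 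In addition, whether $|u^*|\le 1$ holds depends on $r$, so the "vertex branch" and "endpoint branch" cannot simply be optimized separately over all of $[0,1]$; one must track for which $r$ each formula is valid, and that is exactly where the condition $-4AC(C^{-2}-1)\le B^2$ arises and needs a genuine argument. So the proposal identifies the right method but does not yet prove part (ii); for the purposes of this paper, citing \cite{choi} (as the authors do) is the intended treatment.
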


\section{Main Results}\label{sec3}
\noindent We prove the following sharp inequality for second hankel determinant of inverse logarithmic coefficient for $\mathcal{K}_\alpha$.
\begin{theorem}
Let $f\in\mathcal{K}_{\alpha}$ given by \eqref{f} then
\begin{equation}\label{thm1}
|H_{2,1}(F_{f^{-1}}/2)|\leq
\begin{cases}
    \cfrac{\alpha^2}{36}, &  0< \alpha \leq 1/3,\\[5mm]
         \cfrac{\alpha^2(17+18\alpha+13\alpha^2)}{144(4+6\alpha+\alpha^2)},        & 1/3 < \alpha \leq 1.
\end{cases}
\end{equation}
	The inequality is sharp.

\end{theorem}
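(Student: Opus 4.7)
I would follow the standard reduction via the Carath\'eodory class. By the definition \eqref{convexdef} of $\mathcal{K}_\alpha$, there exists $p \in \mathcal{P}$ with
$$1 + \frac{zf''(z)}{f'(z)} = p(z)^\alpha.$$
Expanding both sides as power series and matching the coefficients of $z, z^2, z^3$ expresses $a_2, a_3, a_4$ as explicit polynomials in $\alpha$ and in the Carath\'eodory coefficients $c_1, c_2, c_3$. Substituting these into
$$48\, H_{2,1}(F_{f^{-1}}/2) = 13 a_2^4 - 12 a_2^2 a_3 - 12 a_3^2 + 12 a_2 a_4$$
from \eqref{hankel} produces a polynomial in $c_1, c_2, c_3$ whose coefficients depend on $\alpha$.

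Next, using the rotational invariance \eqref{invariance} I would assume $c_1 =: c \in [0,2]$ and invoke Lemma \ref{caratheodary} to replace $c_2, c_3$ by expressions in $c$ and two auxiliary disk variables $x, y \in \overline{\mathbb{D}}$. Since $c_3$ enters the Hankel expression linearly and its $y$-contribution carries the factor $2(4-c^2)(1-|x|^2)y$, the $y$-dependence of $48\, H_{2,1}(F_{f^{-1}}/2)$ is linear. Applying the triangle inequality in $y$ (with $|y| \leq 1$) reduces the problem to bounding an expression of the form
$$M(c,\alpha)\bigl(|A + B x + C x^2| + 1 - |x|^2\bigr) + N(c,\alpha),$$
for explicit $A, B, C, M, N$ depending on $c$ and $\alpha$. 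I would then apply Lemma \ref{y(a,b,c)} to evaluate the supremum over $x \in \overline{\mathbb{D}}$.

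The principal technical obstacle is this application of Lemma \ref{y(a,b,c)}: the applicable branch of $Y(A,B,C)$ depends on the signs and relative magnitudes of $A, B, C$ as functions of $(c, \alpha)$, and a careful case distinction is required. Once the supremum over $x$ is in closed form, one is left with a one-variable maximization of a function of $c \in [0,2]$. I expect the calculus to show that for small $\alpha$ the maximum occurs at the boundary $c = 0$, producing the bound $\alpha^2/36$, while for $\alpha > 1/3$ an interior critical point dominates and yields the rational expression $\alpha^2(17 + 18\alpha + 13\alpha^2)/(144(4 + 6\alpha + \alpha^2))$; equating the two expressions identifies the threshold $\alpha = 1/3$.

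Sharpness is then verified by exhibiting extremal functions. For $0 < \alpha \leq 1/3$, the choice $p(z) = (1+z^2)/(1-z^2)$ (so $c_1 = c_3 = 0$ and $c_2 = 2$) forces $a_2 = a_4 = 0$ and $a_3 = \alpha/3$, giving $|H_{2,1}(F_{f^{-1}}/2)| = \alpha^2/36$ exactly; the corresponding $f \in \mathcal{K}_\alpha$ is obtained by integrating $1 + zf''/f' = ((1+z^2)/(1-z^2))^\alpha$. For $\alpha \in (1/3, 1]$, the extremal $p$ is the Carath\'eodory function determined by the optimal triple $(c, x, y)$ found in the preceding maximization, which one reads off from the equality conditions in Lemmas \ref{caratheodary} and \ref{y(a,b,c)}.
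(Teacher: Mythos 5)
Your plan reproduces the paper's proof essentially step for step: the representation $1+zf''(z)/f'(z)=p(z)^{\alpha}$ with $p\in\mathcal{P}$, rotation invariance to take $c_{1}=c\in[0,2]$, the Libera--Zlotkiewicz parametrization of $c_{2},c_{3}$, the triangle inequality in the last disk variable (here the factored expression has no leftover term, i.e. your $N(c,\alpha)\equiv 0$), the Choi--Kim--Sugawa lemma for $Y(A,B,C)$ with $AC<0$, and a final one-variable maximization in $c$ whose boundary/interior dichotomy produces the threshold $\alpha=1/3$; the case distinction you defer is precisely the content of the paper's subcases 3(a)--3(d) and the comparison of the resulting bounds. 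Your sharpness example $p(z)=(1+z^{2})/(1-z^{2})$ for $0<\alpha\le 1/3$ is indeed the correct extremal (equality at $c=0$, $|z|=1$), and for $\alpha\in(1/3,1]$ the optimal data give the explicit function $\tilde{p}(z)=(1-\tau z+z^{2})/(1-z^{2})$ with $\tau=\sqrt{4(3\alpha-1)/(4+6\alpha+\alpha^{2})}$, exactly as in the paper.
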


\begin{proof}
Fix $\alpha \in (0,1]$ and let $f\in \mathcal{K}_{\alpha}$ be of the form \eqref{f}. Then by \eqref{convexdef},
\begin{equation}\label{3.1.1}
1+\cfrac{zf''(z)}{f'(z)}=(p(z))^\alpha
\end{equation}
for some $p \in \mathcal{P}$ of the form \eqref{p}. By comparing the coefficients on both the sides of \eqref{3.1.1}, we obtain
	\begin{equation}\label{3.1.2}
		\begin{aligned}
			& a_2=\cfrac{\alpha}{2}\,\,\alpha c_1, \\[2mm]
			& a_3=\frac{\alpha}{12}(2c_2+(3\alpha-1)c_1^2), \\[2mm]
			& a_4=\frac{\alpha}{144}\left(12c_3+6(5\alpha-2)c_1c_2+(17\alpha^2-15\alpha+4)c_1^3 \right).
		\end{aligned}
	\end{equation}
Hence by \eqref{hankel}, we have
$$
H_{2,1}(F_{f^{-1}}/2)=\cfrac{\alpha^2}{2304}\left[24c_1c_3-16c_2^2-4(2+3\alpha) c_1^2 c_2+(4+6\alpha+\alpha^2)c_1^4  \right].
$$
Since the class $\mathcal{K}_\alpha$ is invariant under rotation[add one sentensece about lemma] and \eqref{invariance} holds, without loss of generality we can
assume that $c_1 = c$, where $0 \leq c \leq 2$.
\noindent Now using Lemma \ref{caratheodary} and straight forward computation
\begin{equation}\label{mainconvex}
\begin{aligned}
H_{2,1}(F_{f^{-1}}/2)
=&\frac{\alpha^2}{2304}\left[(2+\alpha^2)c^4-6\alpha(4-c^2)c^2 z-2(4-c^2)(8+c^2)z^2\right.\\[2mm]
&\left. +12c(4-c^2)(1-|z|^2)w\right].
\end{aligned}
\end{equation}

\noindent Now we consider different cases on $c$.
\\[2mm]
\textbf{Case 1.} Suppose that $c=0$. Then from \eqref{mainconvex}, for $\alpha \in (0,1]$,

\begin{equation}\label{convexc=0}
|\Gamma_1\Gamma_3-\Gamma_2^2|=\cfrac{\alpha^2}{36}\,\, |z|^2 \leq \cfrac{\alpha^2}{36}
\end{equation}

\vspace{2mm}
\noindent \textbf{Case 2.} Suppose that $c=2$. Then from \eqref{mainconvex}, for $\alpha \in (0,1]$,

\begin{equation}\label{convexc=2}
|\Gamma_1\Gamma_3-\Gamma_2^2| \leq \cfrac{\alpha^2(2+\alpha^2)}{144}
\end{equation}
\vspace{2mm}

\noindent \textbf{Case 3.} Suppose that $c\in (0,2)$. Since $|y|\leq 1$, from \eqref{mainconvex} we obtain

\begin{equation}\label{3cc}
\begin{aligned}
|\Gamma_1\Gamma_3-\Gamma_2^2|& \leq \frac{\alpha^2}{2304}\left[\left|(2+\alpha^2)c^4-6\alpha(4-c^2)c^2 z-2(4-c^2)(8+c^2)z^2\right|\right.\\[2mm]
&\left.\,\,\,\, +12c(4-c^2)(1-|z|^2)w\right]\\[2mm]
&=\cfrac{\alpha^2}{192}(c(4-c^2))\left[|A+Bz+Cz^2|+1-|z|^2\right]
\end{aligned}
\end{equation}
where

$$
A:=\cfrac{c^3(2+\alpha^2)}{12(4-c^2)},\,\,\,\,\,\,\, B:-\cfrac{c \alpha}{2},\,\,\,\,\,\,\, C:=-\cfrac{8+c^2}{6c}.
$$

\vspace{2mm}
\noindent Since $A C < 0$, we apply case (ii) ofLemma \ref{y(a,b,c)}. 

\vspace{2mm}
\noindent \textbf{3(a).} Note that the inequality
$$
-4 A C\left(\frac{1}{C^{2}}-1\right) \leq B^{2}
$$
is equivalent to
$$
\cfrac{c^4(7\alpha^2-4)+8c^2(8+13\alpha^2))}{8+c^2}\geq 0$$
which is evidently holds for $c \in (0,2).$ Moreover, the inequality $|B|<2(1-|C|)$ is equivalent to $c\,(16-2c+c^2(2+3\alpha))< 0$ which is false for $c \in(0,2)$.\\

\noindent \textbf{3(b).} Since 
$$
4(1+|C|)^2=\cfrac{c^4+52c^2+64}{9c^2} > 0,
$$
and
$$
-4 A C\left(C^{-2}-1\right)=-\cfrac{(2+\alpha^2)c^2(16-c^2)}{18(8+c^2)}<0.
$$
We see that 
$$
\min \left\{4(1+|C|)^{2},-4 A C\left(C^{-2}-1\right)\right\}=-4 A C\left(C^{-2}-1\right),
$$
and from case $3(a),$ we know that 
$$
-4 A C\left(C^{-2}-1\right) \leq B^{2}.
$$
Therefore, the inequality $B^{2} < \min \left\{4(1+|C|)^{2},-4 A C\left(C^{-2}-1\right)\right\}$ does not holds for $0<c<2$.\\

\noindent \textbf{3(c).} Next observe that the inequality, 
\begin{equation*}
|C|(|B|+4|A|) -|A B|=\cfrac{192\alpha^2+8(8-3\alpha+4\alpha^2)c^2+(8-12\alpha+4\alpha^2-3\alpha^3)c^4}{4-c^2} \leq 0
\end{equation*}
is equivalent to
\begin{equation}\label{3c} 
\varphi_1(c^2) \leq 0
\end{equation}
where
$$
\varphi_1(t)=192\alpha^2+8(8-3\alpha+4\alpha^2)x+(8-12\alpha+4\alpha^2-3\alpha^3)x^2,\,\,\,\,\,\, t\in (0,4).
$$

\noindent Note that $\varphi_1(0)>0$ and $\varphi_1(4)>0$. Also we can easily seen that $\varphi_1'(t)>0$ for $t \in (0,4)$. So $\varphi_1$ is increasing and hence $\varphi_1(t) >0$ in $t\in (0,4)$. Thus we deduce that the inequality \eqref{3c} is false.\\

\noindent \textbf{3(d).} Next note that the inequality
\begin{equation}\label{3d1}
|AB|-|C|(|B|-4|A|)=\cfrac{\alpha c^4(2+\alpha)^2}{24(4-c^2)}-\cfrac{8+c^2}{6c}\left(\cfrac{\alpha c}{2}-\cfrac{c^3(2+\alpha)^2}{3(4-c^2)}\right) \leq 0
\end{equation}
is equivalent to
\begin{equation}\label{3d2}
\varphi_2(u^2)\leq 0,
\end{equation}
where
$$
\varphi_2(u)=(8+12\alpha+4\alpha^2+3\alpha^3)u^2+8(8+3\alpha+4\alpha^2)u-192\alpha, \,\,\,\,\,\,\, u \in (0,4).
$$

\noindent We see that the discriminant $ \Delta := 64(64+144\alpha+217\alpha^2+72\alpha^3+52\alpha^4)>0.$
Thus we consider,

$$
u_{1,2}=\cfrac{4(-(4\alpha^2+3\alpha+8)\mp\sqrt{52\alpha^4+72\alpha^3+217\alpha^2+144\alpha+64}}{3\alpha^3+4\alpha^2+12\alpha+8}.
$$

\vspace{3mm}

\noindent It is clear that $u_1 <0$. Moreover $0<u_2<4$ holds. Indeed both inequalities $u_2>0$ and $u_2<4$ are equivalent to the evidently true inequalities 
$$
8+12\alpha+4\alpha^2+3\alpha^3 >0
$$
and
$$
192+33\alpha+264\alpha^2+264\alpha^3+102\alpha^4+48\alpha^5+9
\alpha^6 >0
$$
Thus \eqref{3d2} and so \eqref{3d1} is valid only when
$$
0<c\leq \tilde{c}=\sqrt{u_2}.
$$

\noindent Thus by \eqref{3cc} and Lemma \ref{y(a,b,c)},
\begin{equation}
\begin{aligned}
|\Gamma_1\Gamma_3-\Gamma_2^2| &\leq \cfrac{\alpha^2}{192}c(4-c^2)(-|A|+|B|+|C|)\\[2mm]
&=\cfrac{\alpha^2}{2304}(64-(8-24\alpha)c^2-(4+6\alpha+\alpha^2)c^4)\\[2mm]
&=\Phi(c^2)
\end{aligned}
\end{equation}
where
$$
\Phi(t)=\cfrac{\alpha^2}{2304}(64-(8-24\alpha)t-(4+6\alpha+\alpha^2)t^2), \,\,\,\,\,\,\,\, t \in (0,u_2).
$$
Since 
$$\Phi'(t)=\cfrac{\alpha^2}{2304}((8-24\alpha)-2(4+6\alpha+\alpha^2)c),\,\,\,\,\,\, t \in (0,u_2),
$$
we see that $0<\alpha \leq 1/3$, the funtion $\Phi$
is decreasing and so 
$$
\Phi(t)\leq\Phi(0)=\cfrac{\alpha^2}{36}, \,\,\,\,\, 0\leq t\leq u_2
$$
In the case $1/3 < \alpha \leq 1$,
$$t_0=\cfrac{4(3\alpha-1)}{4+6\alpha+\alpha^2}
$$
is a unique critical point of $\Phi$. Clearly $t_0>0$. It remains to check whether $t_0<u_2$, which is equivalent to 
\begin{equation*}
\begin{aligned}
\delta(\alpha)=& 117\alpha^8+240\alpha^7-149\alpha^6
-1212\alpha^5-4344\alpha^4 \\[2mm]
&-6288\alpha^3-4464\alpha^2
-1920\alpha-448 <0, \,\,\,\,\, \alpha \in (1/3,1],
\end{aligned} 
\end{equation*}
and since 
$$
\delta(\alpha)\leq -149\alpha^6
-1212\alpha^5-4344\alpha^4 
-6288\alpha^3-4464\alpha^2
-1920\alpha-91 <0
$$
for $\alpha \in (1/3,1]$, we deduce that $t_0 <u_2$.\\

\noindent Thus for $1/3 < \alpha \leq 1$, we have
$$
\Phi(t) \leq \Phi(t_0)=\cfrac{\alpha^2(17+18\alpha+13\alpha^2)}{144(4+6\alpha+\alpha^2)}, \,\,\,\,\,\, 0<t<u_2.
$$
We can conclude that, for $0<c\leq\tilde{c}$
\begin{equation}\label{3cconclusion}
|\Gamma_1\Gamma_3-\Gamma_2^2| \leq
\begin{cases}
    \cfrac{\alpha^2}{36}, &  0< \alpha \leq 1/3,\\[4mm]
         \cfrac{\alpha^2(17+18\alpha+13\alpha^2)}{144(4+6\alpha+\alpha^2)},        & 1/3 < \alpha \leq 1.
\end{cases}
\end{equation}

\noindent \textbf{3(d).} We now consider the last case in Lemma \ref{y(a,b,c)}, which in view of 3(d) holds for $\tilde{c}<c<2$. Then by \eqref{3cc}, we have 
\begin{equation}\label{lastconvex}
\begin{aligned}
|\Gamma_1\Gamma_3-\Gamma_2^2| &\leq \cfrac{\alpha^2}{192}\,\,c\,\,(4-c^2)(|C|+|A|)\sqrt{1-\cfrac{B^2}{4AC}}\\[2mm]
&=\cfrac{\alpha^2}{2304\sqrt{2(2+\alpha^2)}}\left(64-8c^2+\alpha^2c^4\right)\sqrt{\cfrac{32+52\alpha^2+c^2(4-7\alpha^2)}{8+c^2}}\\[2mm]
&=\cfrac{\alpha^2}{2304\sqrt{2(2+\alpha^2)}}g_1(c^2)\sqrt{g_2(c^2)}
\end{aligned}
\end{equation}
where
$$
g_1(x)=64-8x+\alpha^2x^2
$$
and
$$
g_2(x)=\cfrac{32+52\alpha^2+c^2(4-7\alpha^2)}{8+c^2}
$$
It is easily seen that $g_1$ and $g_2$ are decreasing on $(u
_2,4)$, and so from \eqref{lastconvex} we obtain
\begin{equation}\label{3dd}
|\Gamma_2\Gamma_3-\Gamma_1^2|  \leq\cfrac{\alpha^2}{2304\sqrt{2(2+\alpha^2)}}g_1(u_2)\sqrt{g_2(u_2)}=\Psi(u_2)
\end{equation}
%

\noindent \textbf{Case 4.} It remains to compare the bounds in \eqref{convexc=0},\eqref{convexc=2},\eqref{3cconclusion} and \eqref{3dd}. The inequality

$$
\cfrac{\alpha^2(2+\alpha^2)}{144} \leq \cfrac{\alpha^2}{36}, \,\,\,\,\, \alpha \in(0,1] 
$$

\noindent is trivial. Since the function is $\Phi$ is decreasing in $(u_2,4)$ and $\Phi(u_2)=\Psi(u_2)$, the inequality 
$$
\cfrac{\alpha^2(2+\alpha^2)}{144}\leq \cfrac{\alpha^2(17+18\alpha+13\alpha^2)}{144(4+6\alpha+\alpha^2)}
$$
is true for $1/3 < \alpha \leq 1$. Finally the inequality
$$
\cfrac{\alpha^2}{36}\leq \cfrac{\alpha^2(17+18\alpha+13\alpha^2)}{144(4+6\alpha+\alpha^2)}, \,\,\,\,\, \alpha \in (1/3,1]
$$
is equivalent to 
$$
9\alpha^2-6\alpha+1\geq0 
$$
which is evidently true for $1/3 < \alpha \leq 1$.\\
Thus summarizing the results in cases 1-4, we see that \eqref{thm1} holds. \\

We now proceed to prove the equality part. When $\alpha \in (0,1/3]$, equality holds for the function $f \in \mathcal{A}$ given by \eqref{3.1.1} with $p
$ given by 
$$
p(z)=\cfrac{1+z}{1-z}, \,\,\,\,\,\,\,\, z \in \mathbb{D}.
$$
When $\alpha \in (1/3,1]$, set
$$
\tau:=\sqrt{t_0}=\sqrt{\cfrac{4(3\alpha-1)}{(4+6\alpha+\alpha^2)}}.
$$

\noindent Since $\tau \leq 2$, the function
$$
\tilde{p}(z)=\cfrac{1-\tau z+z^2}{1-z^2}, \,\,\,\,\, z \in \mathbb{D},
$$

\noindent belongs to $\mathcal{P}$. Thus the function $f$ given by \eqref{3.1.1}, where $p$ replaced by $\tilde{p}$ belongs to $\mathcal{K}_\alpha$ and is extremal function for the second inequality in \eqref{thm2}

where . This completes the proof.
\end{proof}
For $\alpha=1$, we get the following result\cite{amal}.
\begin{corollary}
If $f \in \mathcal{K}$, then 
$$
|H_{2,1}(F_{f}/2)|\leq \cfrac{1}{33}.
$$
\end{corollary}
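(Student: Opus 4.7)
The corollary is a direct specialization of the theorem just proved, obtained by setting $\alpha=1$, since the class $\mathcal{K}$ of convex functions coincides with $\mathcal{K}_1$ (strongly convex of order one). So my plan is simply to evaluate the second branch of the piecewise bound in \eqref{thm1} at $\alpha=1$ and verify the arithmetic reduces to $1/33$, then exhibit the corresponding extremal function. (I note that the statement appears to contain a typographical slip: the displayed quantity is $|H_{2,1}(F_f/2)|$ but the enclosing theorem is about $|H_{2,1}(F_{f^{-1}}/2)|$; I will prove the latter, which is what the specialization actually yields.)

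First, I would observe that $1/3 < 1 \leq 1$, so the relevant branch of \eqref{thm1} is the second one. Substituting $\alpha=1$ into
$$
\frac{\alpha^{2}(17+18\alpha+13\alpha^{2})}{144(4+6\alpha+\alpha^{2})}
$$
gives a numerator of $17+18+13 = 48$ and a denominator of $144\cdot 11 = 1584$. Hence the bound collapses to $48/1584 = 1/33$, as required. No further analytic work is needed; the theorem delivers the inequality.

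For sharpness, I would invoke the extremal construction from the theorem in the case $\alpha\in(1/3,1]$. With $\alpha=1$ we have
$$
\tau \;=\; \sqrt{\frac{4(3\alpha-1)}{4+6\alpha+\alpha^{2}}}\bigg|_{\alpha=1} \;=\; \sqrt{\frac{8}{11}},
$$
which satisfies $0<\tau\leq 2$, so the Carathéodory function
$$
\tilde{p}(z) \;=\; \frac{1-\tau z + z^{2}}{1-z^{2}}
$$
lies in $\mathcal{P}$, and the function $f\in\mathcal{K}$ determined by $1+zf''(z)/f'(z) = \tilde{p}(z)$ achieves equality. The only step that warrants double-checking is the numerical substitution producing exactly $1/33$ and the verification that the extremizer from the theorem specializes consistently at $\alpha=1$; neither presents any real obstacle, since the theorem has already handled the genuine content.
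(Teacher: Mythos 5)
Your proposal is correct and matches the paper's (implicit) argument exactly: the corollary is obtained by substituting $\alpha=1$ into the second branch of \eqref{thm1}, and $\frac{17+18+13}{144(4+6+1)}=\frac{48}{1584}=\frac{1}{33}$. You are also right that the displayed functional should read $|H_{2,1}(F_{f^{-1}}/2)|$ rather than $|H_{2,1}(F_{f}/2)|$; this is a typographical slip in the statement.
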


We next find the sharp bound for the second Hankel determinant of inverse logarithmic coefficient of functions in $ \mathcal{S}^*_\alpha$.

\begin{theorem}
Let $f\in\mathcal{S}^*_{\alpha}$ given by \eqref{f} then
\begin{equation}\label{thm2}
|H_{2,1}(F_{f^{-1}}/2)|\leq
\begin{cases}
    \cfrac{\alpha^2}{4}, &  0< \alpha < 1/5,\\[4mm]
         \cfrac{\alpha^2(2+5\alpha+15\alpha^2)}{7+30\alpha+35\alpha^2},        & 1/5 \leq \alpha \leq \alpha', \\[4mm]
         \cfrac{\alpha^2}{36}(4+35\alpha^2), & \alpha'<\alpha \leq 1.
\end{cases}
\end{equation}
where $\alpha'=0.390595...$ is the unique root in $(0,1)$ of the equation $
44+60\alpha+155\alpha^2-1050\alpha^3-1225\alpha^4
=0$. The inequality \eqref{thm2} is sharp.

\end{theorem}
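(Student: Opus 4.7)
The plan is to follow the template of the proof of Theorem 3.1 closely, with the defining condition \eqref{convexdef} replaced by \eqref{stardef}. For $f \in \mathcal{S}^*_{\alpha}$ I would write $zf'(z)/f(z) = (p(z))^{\alpha}$ with $p \in \mathcal{P}$ of the form \eqref{p}, expand $(p(z))^{\alpha}$ as a power series and equate coefficients to obtain explicit formulas for $a_2, a_3, a_4$ as polynomials in $c_1, c_2, c_3$ whose coefficients depend on $\alpha$. Substituting these into \eqref{hankel} presents $H_{2,1}(F_{f^{-1}}/2)$ as a quartic form in the Carathéodory coefficients. By the rotational invariance \eqref{invariance} I may take $c_1 = c \in [0,2]$; then Lemma \ref{caratheodary} lets me parametrize $c_2, c_3$ in terms of $c$ and auxiliary $z, w \in \overline{\mathbb{D}}$, and bounding $|w| \le 1 - |z|^2$ reduces the estimate to
\[
|H_{2,1}(F_{f^{-1}}/2)| \le \frac{\alpha^2}{\kappa}\, c(4-c^2)\bigl(|A + Bz + Cz^2| + 1 - |z|^2\bigr)
\]
on $c \in (0,2)$, with $A, B, C$ explicit rational functions of $c$ and $\alpha$ (and $AC < 0$), precisely the form needed for Lemma \ref{y(a,b,c)}\,(ii).

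The degenerate cases are immediate: $c = 0$ reduces the determinant to $-a_3^2/4 = -\alpha^2 c_2^2/16$, giving the bound $\alpha^2/4$ from $|c_2|\le 2$; while $c = 2$ forces $p(z) = (1+z)/(1-z)$ with all $c_n = 2$, and direct substitution returns $\alpha^2(4+35\alpha^2)/36$. For $c\in(0,2)$ I would run the trichotomy of Lemma \ref{y(a,b,c)}\,(ii). Paralleling the convex argument, I expect the first two sub-cases of (ii) to be vacuous on $(0,2)$, leaving only the $R(A,B,C)$ branch. Within $R$, I expect only the $-|A|+|B|+|C|$ sub-formula to be active on an initial subinterval $0 < c \le \tilde{c} = \sqrt{u_2}$, where $u_2$ is the positive root of an explicit quadratic $\varphi_2(u) = 0$ in $u = c^2$ (coefficients depending on $\alpha$); the $(|A|+|C|)\sqrt{1 - B^2/(4AC)}$ sub-formula governs $\tilde{c} < c < 2$, but yields a decreasing majorant $\Psi$ that meets the first majorant $\Phi$ continuously at $c = \tilde{c}$ and so is dominated there.

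The three regimes in \eqref{thm2} then emerge from maximizing the resulting quadratic $\Phi(t)$ in $t = c^2 \in (0, u_2)$. Its unique critical point $t_0(\alpha)$ lies to the left of $(0, u_2)$ exactly for $\alpha \in (0, 1/5)$, so $\Phi$ is decreasing on $[0, u_2]$ and its supremum is $\Phi(0) = \alpha^2/4$; for $1/5 \le \alpha \le \alpha'$ the critical point $t_0$ lies in $(0, u_2)$ and supplies the interior maximum $\alpha^2(2 + 5\alpha + 15\alpha^2)/(7 + 30\alpha + 35\alpha^2)$; for $\alpha > \alpha'$ that interior value is overtaken by the $c = 2$ bound $\alpha^2(4 + 35\alpha^2)/36$. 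Equating the interior and boundary candidates and clearing denominators collapses to $44 + 60\alpha + 155\alpha^2 - 1050\alpha^3 - 1225\alpha^4 = 0$, whose unique root in $(0,1)$ is $\alpha'$.

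The main obstacle is the interlocking polynomial bookkeeping: verifying which sub-cases of Lemma \ref{y(a,b,c)}\,(ii) are active on which subintervals of $c$, showing $t_0 \in (0, u_2)$ precisely on the middle $\alpha$-regime, and checking that the quartic $44 + 60\alpha + 155\alpha^2 - 1050\alpha^3 - 1225\alpha^4$ has exactly one sign change in $(0,1)$ (so that $\alpha'$ is genuinely the crossover of the two candidate maxima and not a spurious root). Sharpness would then be exhibited by explicit extremal $p \in \mathcal{P}$: a Carathéodory function with $c_1 = 0$ such as $p(z) = (1+z^2)/(1-z^2)$ for $0 < \alpha < 1/5$, the two-parameter function $\tilde{p}(z) = (1 - \tau z + z^2)/(1 - z^2)$ with $\tau = \sqrt{t_0(\alpha)}$ for $1/5 \le \alpha \le \alpha'$, and $p(z) = (1+z)/(1-z)$ for $\alpha' < \alpha \le 1$, each lifted to an extremal $f \in \mathcal{S}^*_{\alpha}$ via $zf'/f = p^{\alpha}$.
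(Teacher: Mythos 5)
Your overall strategy coincides with the paper's: the representation $zf'/f=p^{\alpha}$, reduction to a quartic in $c_1,c_2,c_3$, normalization $c_1=c\in[0,2]$ via rotation, the Libera--Zlotkiewicz parametrization, and Lemma \ref{y(a,b,c)}(ii) with the same $A,B,C$, the same degenerate cases, and the same extremal functions. The genuine gap is in your treatment of the last branch of $R(A,B,C)$ on $\tilde c<c<2$. You assert, by analogy with the convex case, that this branch "yields a decreasing majorant $\Psi$ that meets $\Phi$ continuously at $c=\tilde c$ and so is dominated there." That is false for the starlike class: here the majorant is $\frac{\alpha^2}{288\sqrt{4+35\alpha^2}}\,h_1(c^2)\sqrt{h_2(c^2)}$ with $h_1(x)=144-24x+(1+35\alpha^2)x^2$, and $h_1'(4)=-16+280\alpha^2>0$ once $\alpha>\sqrt{2/35}\approx 0.24$, so the majorant is eventually increasing and its supremum on the tail is approached as $c\to 2$, where it equals $\frac{\alpha^2}{36}(4+35\alpha^2)$ --- precisely the bound that wins in the third regime. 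The paper handles this by showing $F=h_1\sqrt{h_2}$ is convex on the interval (via a positivity argument for a quartic in $x$ with $\alpha$-dependent coefficients $m_0,\dots,m_3$) and bounding it by $\max\{F(t_2),F(4)\}$; your proof as written leaves the interval $(\tilde c,2)$ uncontrolled, and although the final statement could be rescued because $c=2$ is treated separately, the step you rely on is simply untrue and must be replaced by something like the paper's convexity argument.

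A second, smaller inaccuracy: the threshold at which the critical point $s_0=12(5\alpha-1)/(7+30\alpha+35\alpha^2)$ of $\Phi$ leaves the admissible interval $(0,t_2)$ is not $\alpha'$ but a root $\alpha_0\approx 0.8$ of a degree-eight polynomial; for $\alpha_0<\alpha\le 1$ the quadratic $\phi_2$ is increasing on $(0,t_2]$ and the relevant bound is $\phi_2(t_2)$, which then has to be compared with $\frac{\alpha^2}{36}(4+35\alpha^2)$ (the paper does this explicitly). The crossover $\alpha'$ in the statement arises purely from comparing the interior maximum $\frac{\alpha^2(2+5\alpha+15\alpha^2)}{7+30\alpha+35\alpha^2}$ with the $c=2$/tail value, not from the geometry of $t_0$ relative to $(0,u_2)$, so your bookkeeping of the $\alpha$-regimes inside Case 3 needs this extra subcase and comparison to be complete.
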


\begin{proof}
Fix $\alpha \in (0,1]$ and let $f\in \mathcal{S}^*_{\alpha}$ be of the form \eqref{f}. Then by \eqref{stardef},
\begin{equation}\label{3.3.1}
\cfrac{zf'(z)}{f(z)}=(p(z))^\alpha
\end{equation}
for some $p \in \mathcal{P}$ of the form \eqref{p}. By comparing the coefficients on both sides of \eqref{3.3.1}, we obtain
	\begin{equation}\label{3.3.2}
		\begin{aligned}
			& a_2=\alpha c_1, \\[2mm]
			& a_3=\frac{\alpha}{4}(2c_2+(3\alpha-1)c_1^2), \\[2mm]
			& a_4=\frac{\alpha}{36}\left(12c_3+6(5\alpha-2)c_1c_2+(17\alpha^2-15\alpha+4)c_1^3 \right).
		\end{aligned}
	\end{equation}
Hence by \eqref{hankel}, we have
$$
H_{2,1}(F_{f^{-1}}/2)=\cfrac{\alpha^2}{576}\left((7+30\alpha+35\alpha^2) c1^4-(1+5\alpha) c_1^2c_2-36c_2^2+48 c_1c_3\right).
$$

\noindent Since the class $\mathcal{S}^*_{\alpha}$ is invariant under rotation, without loss of generality, we can
assume that $c_1 = c,$ where $0 \leq c \leq 2$.  Therefore, by Lemma \ref{caratheodary}, for some $c \in [0,2]$ and $z,w \in \mathbb{D}$ we have

\begin{equation}\label{main}
\begin{aligned}
H_{2,1}(F_{f^{-1}}/2)
=&\frac{\alpha^2}{576}\left[(4+35\alpha^2)c^4-30\alpha(4-c^2)c^2 z-3(4-c^2)(12+c^2)z^2\right.\\[2mm]
&\left. +24c(4-c^2)(1-|z|^2)w\right].
\end{aligned}
\end{equation}
Now we have the following cases on $c$.\\[1mm]
\textbf{Case 1:} Suppose that $c=0$. Then by \eqref{main}, we obtain
\begin{equation}\label{starlikec=0}
|H_{2,1}(F_{f^{-1}}/2)|=\frac{1}{4}|z^2|\alpha^2\leq\frac{\alpha^2}{4}.
\end{equation}
\textbf{Case 2:} Suppose that $c=2$. Then by \eqref{main}, we obtain
\begin{equation}\label{convexc=2}
|H_{2,1}(F_{f^{-1}}/2)|=\frac{\alpha^2}{36}(4+35\alpha^2).
\end{equation}
\textbf{Case 3:} Suppose that $c \in (0,2)$. Applying the triangle inequality in \eqref{main} and by using the fact
that $|w| \leq 1$, we obtain
\begin{equation}\label{case3main}
\begin{aligned}
H_{2,1}(F_{f^{-1}}/2)
&=\frac{\alpha^2}{576}\left[\left|(4+35\alpha^2)c^4-30\alpha(4-c^2)c^2 z-3(4-c^2)(12+c^2)z^2\right|\right.\\[2mm]
&\left. \,\,\,\, + 24c(4-c^2)(1-|z|^2)w\right] \\[2mm]
&\leq \frac{\alpha^2}{24}c(4-c^2)\left[\left| A+B z+C z^2 \right|+1-|z^2|\right],
\end{aligned}
\end{equation}
where 
$$
A:=\frac{c^3 (4 + 35 \alpha^2)}{24(4-c^2)}, \quad B:=-\frac{5}{4} \alpha c,\quad C:=-\frac{12+c^2}{8c}.
$$
Since $AC < 0$, so we can apply case (ii) of Lemma \ref{y(a,b,c)}.\\
[3mm]
\textbf{3(a).} Note that the inequality
$$
-4 A C\left(\frac{1}{C^{2}}-1\right) \leq B^{2}
$$
is equivalent to
$$
\cfrac{c^2(36+540\alpha^2+c^2(10\alpha^2-1))}{12+c^2}\geq 0$$

\vspace{2mm}
\noindent which evidently holds for $c \in (0,2).$ Moreover, the inequality $|B|<2(1-|C|)$ is equivalent to $12 + c^2 (1 + 5 \alpha)-8c<0$ which is not true for $c \in(0,2)$.\\
[3mm]
\textbf{3(b).} 
Since 
$$
-4 A C\left(C^{-2}-1\right)=\cfrac{c^2(-36+c^2)(4+35\alpha^2)}{48(12+c^2)} < 0,
$$
and
$$
4(1+|C|)^2=\frac{(12-8c+c^2)^2}{16c^2}>0.
$$
So we get 

$$
\min \left\{4(1+|C|)^{2},-4 A C\left(\frac{1}{C^{2}}-1\right)\right\}=-4 A C\left(\frac{1}{C^{2}}-1\right),
$$
and from $3(a),$ we know that 
$$
-4 A C\left(\frac{1}{C^{2}}-1\right) \leq B^{2}.
$$
Therefore, the inequality $B^{2} < \min \left\{4(1+|C|)^{2},-4 A C\left(\frac{1}{C^{2}}-1\right)\right\}$ does not holds for $0<c<2$.\\
[3mm]
\textbf{3(c).} Note that the inequality $$|C|(|B|+4|A|) \leq |A B|$$ is equivalent to 

$$
720\alpha+24c^2(4-5\alpha+35\alpha^2)-c^4(-8+35\alpha-70\alpha^2+175\alpha^3) \leq 0.
$$
Consider the function $\phi_1:(0,4)\rightarrow \mathbb{R}$ defined by 
\noindent
$$
\phi_1(x)=720\alpha+24x(4-5\alpha+35\alpha^2)-x^2(-8+35\alpha-70\alpha^2+175\alpha^3)
$$

\noindent Clearly $\phi_1(0)=720 \alpha > 0$ and $\phi_1(4)=16(32-20\alpha+280\alpha^2-175\alpha^3>0$. It can be shown that $\phi_1'(x)>0$. Hence $\phi_1$ is increasing and consequently we concluded that $\phi_1(x)>0$. Therefore the inequality $|C|(|B|+4|A|) \leq |A B|$ is false.
\vspace{2mm}

\noindent \textbf{3(d).}
Note that the inequality 

\begin{equation}\label{eq1}
\begin{aligned}
&|A B|-|C|(|B|-4|A|)\\[2mm]
&= \cfrac{5c^4\alpha (4+35\alpha^2)}{96(4-c^2)}-\cfrac{12+c^2}{8c}\left(\cfrac{5 c \alpha}{4}-\cfrac{c^3(4+35\alpha^2}{6(4-c^2}\right)
\end{aligned}
\end{equation}
is equivalent to 
\begin{equation}\label{eq2}
\delta(c^2)\geq 0,
\end{equation}
where
\begin{equation*}
\delta(t)=720 \alpha-24(4+5\alpha+35\alpha^2)t-(8+35\alpha+70\alpha^2+175\alpha^3)t^2, \,\,\, t \in (0,4)
\end{equation*}

\noindent We see that for $\alpha \in (0,1]$
$$
4+5\alpha+35\alpha^2 >0, \,\, 8+35 \alpha+70\alpha^2+175\alpha^3 >0,
$$
and the discriminant $\Delta:=2304(4+20 \alpha+120\alpha^2+175\alpha^3+525\alpha^4]>0$ for $\alpha \in (0,1].$\\

\noindent Thus we consider
$$
t_{1,2}=-\cfrac{12\left(4+5\alpha+35\alpha^2\pm 2\sqrt{4+20\alpha+120\alpha^2+175\alpha^3+525\alpha^4}\right)}{8+35\alpha+70\alpha^2+175\alpha^3}
$$
\vspace{1mm}

\noindent It is clear that $t_1 <0$ and so it remains to check if $0<t_2<4$. The inequality $t_2>0$ is equivalent to 

$$
4+5\alpha+35\alpha^2-2\sqrt{4+20\alpha+120\alpha^2+175\alpha^3+525\alpha^4}<0
$$
 
\noindent which is true for $\alpha \in (0,1]$. Further the inequality $t_2 <4 $ can be written as

$$
6\sqrt{4+20\alpha+120\alpha^2+175\alpha^3+525\alpha^4}<5(4+10\alpha+35\alpha^2+35\alpha^3),
$$

\noindent which is also true for $\alpha \in (0,1]$. There for \eqref{eq2} and so \eqref{eq1} is valid for 
$$
0<c\leq c^*:=\sqrt{t_2}.
$$

\noindent Then by Lemma 2 and \eqref{eq1}, for $0<c\leq c^*$, we have 

\begin{equation*}
\begin{aligned}
|\Gamma_1\Gamma_3-\Gamma_2^2| & \leq  \cfrac{\alpha^2}{24}c(4-c^2)\left(-|A|+|B|+|C|\right)\\[2mm]
&=\cfrac{\alpha^2}{576}\left(144-24(1-5\alpha)c^2-(70+30\alpha+35\alpha^2)c^4\right)\\[2mm]
&=\cfrac{\alpha^2}{576}\phi_2(c^2)
\end{aligned}
\end{equation*}
where 

$$\phi_2(s)=144-24(1-5\alpha)s-(70+30\alpha+35\alpha^2)s^2, \,\,\,\, 0 < s \leq t_1.
$$
\\[1mm]
we note that $\phi'(s)=0$ only when $s=s_0:=12(5\alpha-1)/(7+30\alpha+35\alpha^2)$ and it is easy to see that $s_0>0$ only if $\alpha>1/5$. Further $s_0 \leq t_2$ is true if $\beta(\alpha)<0$, \\
where
\begin{equation*}
\begin{aligned}
\beta(x)=&-96-1060x-7040x^2-29975x^3-77525x^4\\
&-124250x^5-82250x^6+153125x^7+459375x^8, \,\,\,\, 0 < x \leq 1
\end{aligned}
\end{equation*}

\noindent So we get $0<s_0<t_2$ if and only if $1/5<\alpha\leq \alpha_0$, where $\alpha_0$ is the unique positive real root of the polynomial $\beta(x)$ lies between $0$ and $1$.
Since the leading coefficient of $\phi(x)$ is negative, for $1/5 < \alpha < \alpha_0$, we get
\begin{equation}\label{alpha1}
\begin{aligned}
|\Gamma_1\Gamma_3-\Gamma_2^2|
&\leq \cfrac{\alpha^2}{576}\phi_2(s_0)\\[2mm]
&=\alpha^2\left(\cfrac{2+5\alpha+15\alpha^2}{7+30\alpha+35\alpha^2}\right).
\end{aligned}
\end{equation}
For $0<\alpha \leq 1/5$, it is clear that $\phi_2$ is decreasing, so we get 
\begin{equation}\label{alpha2}
\begin{aligned}
|\Gamma_1\Gamma_3-\Gamma_2^2|
&\leq \cfrac{\alpha^2}{576}\,\,\phi_2(0)\\[2mm]
&=\cfrac{\alpha^2}{4}.
\end{aligned}
\end{equation}
For $\alpha_0<\alpha \leq 1$, it is clear that $\phi_2$ is increasing, so we get 
\begin{equation}\label{alpha3}
\begin{aligned}
|\Gamma_1\Gamma_3-\Gamma_2^2|
&\leq \cfrac{\alpha^2}{576} \,\, \phi_2(t_1)
\end{aligned}
\end{equation}
\textbf{3(e).} Next suppose that $c \in (c^*,2)$. Then by Lemma \ref{y(a,b,c)}, we have 
\begin{equation}
\begin{aligned}
|\Gamma_1\Gamma_3-\Gamma_2^2|
&\leq \frac{\alpha^2}{576}c(4-c^2)(|C|+|A|) \sqrt{1-\frac{B^{2}}{4 A C}}\\
&= \cfrac{\alpha^2}{288\sqrt{4+35\alpha^2}}\left(144-24c^2+(1+35\alpha^2)c^4\right)\sqrt{\cfrac{12+180\alpha^2+(1-10\alpha^2)c^2}{12+c^2}}\\
&=\cfrac{\alpha^2}{288\sqrt{4+35\alpha^2}} \,\,\,h_1(c^2)\sqrt{h_2(c^2)}
\end{aligned}
\end{equation}
where $h_1$ and $h_2$ are defined by 
$$
h_1(x)=144-24x+(1+35\alpha^2)x^2
$$
and
$$
h_2(x)=\cfrac{12+180\alpha^2+(1-10\alpha^2)x}{12+x}.
$$

\noindent Now we consider the function  $F(x)=h_1(x)\sqrt{h_2(x)}$ on $(t_1,4)$ and we show that $F$ is convex function. It is enough to show that $F''(x)\geq 0.$\\
Since 
\begin{equation*}
\begin{aligned}
F''(x)(h_2(x))^{3/2}&=h_1''(x)(h_2(x))^2+h_1'(x)h_2(x)h_2'(x)-\frac{1}{4}h_1(x)(h_2'(x))^2+\frac{1}{2}h_1(x)h_2(x)h_2''(x)\\
&=\cfrac{1}{(12+x)^4}\left[2592(8+820\alpha^2+14075\alpha^4+63000\alpha^6)
\right.\\  &\left.\,\,\,\,-432(-16-890\alpha^2-4525\alpha^4+31500\alpha^6)x
\right.\\
&\left.\,\,\,\,-
54(-16-540\alpha^2+1475\alpha^4+31500\alpha^6)x^2\right.\\
&\left.-\,\,\,\,
6(8+195\alpha^2-2925\alpha^4+1750\alpha^6)x^3+
(1-10\alpha^2)^2(1+35\alpha^2)x^4\right]\\
&=\cfrac{1}{(12+x)^2}G(x)
\end{aligned}
\end{equation*}
It is easy to see that $h_2$ is a positive decreasing function in $(t_2,4)$. We show that our assertion is true by proving that $G(x)\geq 0$ for $x \in (t_2, 4).$\\[2mm]
Let $x \in (t_2,4)$ is fixed and 
\begin{equation}
H(\alpha)=m_0+m_1\alpha^2+m_2\alpha^4+m_3\alpha^6
\end{equation}
where
\begin{equation}
\begin{aligned}
&m_0:=20736+6912x+864x^2+48x^3+x^4, \\[2mm]
&m_1:=2125440+384480x+29160x^2+1170x^3+15x^4, \\[2mm]
&m_2:=36482400+1954800x-79650x^2-17550x^3-600x^4, \\[2mm]
&m_3:=163296000-13608000x-146750x^2+10500x^3+3500x^4.\\
\end{aligned}
\end{equation}
Clearly $m_0>0 $ and $m_1>0$. For $x \in (x_1,4)$, we have
\begin{equation}
\begin{aligned}
m_2&=36482400+1954800x-79650x^2-17550x^3-600x^4, \\[2mm]
&> 1200(28276+1629x)>0
\end{aligned}
\end{equation}
Similarly, we have
\begin{equation}
\begin{aligned}
m_3&=163296000-13608000x-146750x^2+10500x^3+3500x^4.\\[2mm]
&>3500 (24408+3x^3+x^4)>0
\end{aligned}
\end{equation}
So we get $H(\alpha) >0$ for $\alpha \in (0,1]$ and $x \in (t_2,4)$, which therefore shows that $G(x) \geq 0.$
So we can conclude that

\begin{equation*}
\begin{aligned}
F(x)&\leq \,\, \max \{F(t_2),F(4)\} \\[2mm]
&\leq 
\begin{cases}
    F(t_2),&  0< \alpha \leq \alpha^*\\[2mm]
    F(4),              & \alpha^* \leq \alpha \leq 1
\end{cases}
\end{aligned}
\end{equation*}
So we get, for $c \in (c^*,2)$
\begin{equation}\label{alpha4}
|\Gamma_1\Gamma_3-\Gamma_2^2| \leq
\begin{cases}
    \cfrac{\alpha^2}{288\sqrt{4+35\alpha^2}}\,\,F((c^*)^2), &  0< \alpha \leq \alpha^*\\[4mm]
    \cfrac{\alpha^2}{36}(4+35\alpha^2),              & \alpha^* \leq \alpha \leq 1
\end{cases}
\end{equation}
\textbf{Case 4.} Now we compare the bounds in \eqref{alpha1},\eqref{alpha2},\eqref{alpha3} and \eqref{alpha4}.
The inequality
$$
\cfrac{\alpha^2}{36}(4+35\alpha^2) \leq \cfrac{\alpha^2}{4}
$$
is true for $0<\alpha\leq 1/\sqrt{7}$. So we can conclude that for $0<\alpha<1/5$,
$$
|H_{2,1}(F_{f^{-1}}/2)|\leq \cfrac{\alpha^2}{4}.
$$
The inequality 
$$
\cfrac{\alpha^2}{4}\leq \cfrac{\alpha^2(2+5\alpha+15\alpha^2)}{7+30\alpha+35\alpha^2}
$$
is equivalent to the evidently true inequality $(1-5\alpha)^2\geq 0.$ A tedious long calculation shows that the following inequality 

$$
\cfrac{\alpha^2}{288\sqrt{4+35\alpha^2}}\,\,F((c^*)^2) \leq \cfrac{\alpha^2(2+5\alpha+15\alpha^2)}{7+30\alpha+35\alpha^2},
$$

\vspace{3mm}
\noindent is true for $0<\alpha\leq 1$. The inequality

$$
\cfrac{\alpha^2}{36}(4+35\alpha^2) \leq \cfrac{\alpha^2(2+5\alpha+15\alpha^2)}{7+30\alpha+35\alpha^2}
$$
is equivalent to 
$$
44+60\alpha+155\alpha^2-1050\alpha^3-1225\alpha^4 \geq 0,
$$

\vspace{2mm}
\noindent which is true for $0\leq \alpha \leq \alpha'$, where $\alpha'$ is the unique positive real root of the polynomial  $44+60\alpha+155\alpha^2-1050\alpha^3-1225\alpha^4
$ lies between $0$ and $1$. With further long  computations, we can show that the inequality 

$$
\cfrac{\alpha^2}{576} \,\, \phi_2(t_1) \leq \cfrac{\alpha^2}{36}(4+35\alpha^2)
$$
is true for $2/3 \leq \alpha \leq 1$. By summarizing the above cases we get \eqref{thm2} holds. \\

In order to show that the inequalities in \eqref{thm2} are sharp. For $0< \alpha <1/5$, consider the function
$$p(z)=\cfrac{1+z^2}{1-z^2}, \,\,\,\,\,\, z \in \mathbb{D}.$$
It is obvious that $p$ is in $\mathcal{P}$ with $c_1=c_3=0$ and $c_2=2$. The corresponding function $ f \in \mathcal{S}^*_\alpha$ is described by \eqref{3.3.1}. Hence by \eqref{3.3.2}, it follows that $a_2=a_4=0$ and $a_3=\alpha$. From \eqref{hankel}, we obtain 
$$
|H_{2,1}(F_{f^{-1}}/2)|=\cfrac{\alpha^2}{4}.
$$
For $1/5 \leq \alpha \leq \alpha'$, consider the function $f \in \mathcal{A}$ given by \eqref{3.3.1} with $p$ given by
$$
p(z)=\cfrac{1-\zeta z+z^2}{1-z^2}, \,\,\,\,\,\,\, z \in \mathbb{D},
$$
where $\zeta:=\sqrt{s_0}=\sqrt{12(5\alpha-1)/(7+30\alpha+35\alpha^2)}.$ By simple computation we can show that 
$$
|H_{2,1}(F_{f^{-1}}/2)|= \cfrac{\alpha^2(2+5\alpha+15\alpha^2)}{7+30\alpha+35\alpha^2}.
$$
For the last case, $\alpha' <\alpha \leq 1$ equality holds for the function $f \in \mathcal{A}$ of the form \eqref{3.3.1} with $p(z)=1+z/1-z$. This completes the proof.
\end{proof}
For $\alpha=1$, we get the estimate for the class $\mathcal{S}^*$ of starlike functions \cite{vibhuthi2}.
\begin{corollary}
If $f \in \mathcal{S}^*$, then
$$
|H_{2,1}(F_{f^{-1}}/2)|\leq \cfrac{13}{12}.
$$
\end{corollary}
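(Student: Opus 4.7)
The plan is to read off the bound directly from the preceding theorem by specializing to $\alpha=1$, since the class $\mathcal{S}^{*}$ of starlike functions coincides with $\mathcal{S}^{*}_{1}$ (strongly starlike of order $1$). Thus no new coefficient analysis is needed; the work reduces to identifying the correct branch of the piecewise bound in \eqref{thm2} and evaluating it.

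First I would observe that $\alpha = 1$ falls into the third branch of \eqref{thm2}. This requires checking $\alpha' < 1$, where $\alpha' \approx 0.390595$ is the unique root in $(0,1)$ of $44+60\alpha+155\alpha^2-1050\alpha^3-1225\alpha^4 = 0$. Since $\alpha' < 1/2 < 1$, the inequality $\alpha' < 1 \leq 1$ holds, so the relevant bound is
\[
|H_{2,1}(F_{f^{-1}}/2)| \leq \frac{\alpha^{2}}{36}(4+35\alpha^{2}).
\]

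Next I would substitute $\alpha = 1$ directly:
\[
\frac{1^{2}}{36}\bigl(4+35\cdot 1^{2}\bigr) = \frac{39}{36} = \frac{13}{12},
\]
which yields the claimed estimate. The main and only subtle point is confirming the branch selection, i.e., that $\alpha = 1$ really lies in the regime $\alpha' < \alpha \leq 1$ rather than at the boundary of two regimes; this is immediate from the numerical value of $\alpha'$ stated in the theorem.

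Finally, sharpness is inherited automatically. In the last case of the theorem's extremal analysis, equality for $\alpha' < \alpha \leq 1$ is attained by the function $f \in \mathcal{A}$ defined through \eqref{3.3.1} with the Carathéodory function $p(z) = (1+z)/(1-z)$. Setting $\alpha = 1$ in this construction produces a starlike function in $\mathcal{S}^{*}$ for which $|H_{2,1}(F_{f^{-1}}/2)| = 13/12$, so the bound is attained and the corollary is complete. I do not anticipate any real obstacle here; the corollary is a direct numerical specialization of the theorem.
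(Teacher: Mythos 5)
Your proposal is correct and matches the paper's (implicit) derivation exactly: the corollary is obtained by substituting $\alpha=1$ into the third branch of \eqref{thm2}, giving $\frac{1}{36}(4+35)=\frac{13}{12}$, with sharpness inherited from the extremal function $p(z)=(1+z)/(1-z)$. Nothing further is needed.
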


{\bf Acknowledgment:}
The research of the first named author is supported by SERB-CRG, Govt. of India and the second named author's research work is supported by UGC-SRF.

\end{document}